\newcommand{\CP}{\mathds{C}\mathrm{P}}
\newcommand{\C}{\mathds{C}}
\newcommand{\K}{K\"{a}hler}
\newcommand{\Hol}{{\operatorname{Hol}}}
\def\b{\beta}
\def\b1{{\rm id}}
\newtheorem{theor}{Theorem}[section]
\newtheorem{prop}[theor]{Proposition}
\newtheorem{lem}[theor]{Lemma}
\newtheorem{remark}{Remark}
\begin{document}

\title[Partially regular and cscK metrics]{Partially regular and cscK  metrics}

\author{Andrea Loi}
\address{(Andrea Loi) Dipartimento di Matematica e Informatica \\
         Universit\`a di Cagliari (Italy)}
         \email{loi@unica.it}

\author{Fabio Zuddas}
\address{(Fabio Zuddas) Dipartimento di Matematica e Informatica \\
         Universit\`a di Cagliari (Italy)}
         \email{fabio.zuddas@unica.it}

\thanks{
The first two  authors were  supported  by Prin 2015 -- Real and Complex Manifolds; Geometry, Topology and Harmonic Analysis -- Italy, by INdAM. GNSAGA - Gruppo Nazionale per le Strutture Algebriche, Geometriche e le loro Applicazioni and by GESTA - Funded by Fondazione di Sardegna and Regione Autonoma della Sardegna. }
\subjclass[2000]{53D05;  53C55; 53D45} 
\keywords{partial bergman kernel; balanced metric; regular metric; constant scalar curvature metric.}

\begin{abstract}
A \K\ metric $g$ with  integral \K\ form   is said to be  partially regular if  the partial Bergman kernel associated to $mg$ 
is a positive constant for all  integer $m$ sufficiently large. The aim of this paper is to prove that for all $n\geq 2$ there exists an $n$-complex dimensional 
manifold equipped with strictly partially regular and cscK metric $g$. Further, for $n\geq 3$, the   (constant) scalar curvature of $g$ can be chosen to be zero, positive or negative.

\end{abstract}
 
\maketitle

\tableofcontents  

\section{Introduction}

Let $M$ be an $n$-dimensional complex manifold endowed with a K\"ahler metric $g$. Assume that there exists a 
holomorphic line bundle  $L$ over $M$ such that   $c_1(L)=[ \omega ]_{dR}$,
where $\omega$ is the K\"{a}hler form associated to $g$ and $c_1(L)$
denotes the first Chern class of $L$
(such an $L$ exists if and only if  $\omega$ is an integral form).
Let $h$ be an  Hermitian metric on
$L$ such that its Ricci curvature ${\rm Ric}
(h)=\omega$. 
Here ${\rm Ric} (h)$ is the two--form on $M$ whose
local expression is given by
\begin{equation}\label{rich}
{\rm Ric} (h)=-\frac{i}{2\pi}
\partial\bar\partial\log h(\sigma (x), \sigma (x)),
\end{equation}
for a trivializing holomorphic section $\sigma :U\rightarrow
L\setminus\{0\}$. 
Consider the separable complex Hilbert space
$\mathcal{H}$ consisting of global holomorphic sections  $s$ of $L$
such that
$$\langle s, s\rangle=
\int_Mh(s(x), s(x))\frac{\omega^n}{n!}<\infty .$$
Assume ${\mathcal H}\neq \{0\}$.  Let ${\mathcal S}\subseteq {\mathcal H}$
be a complex subspace of  ${\mathcal H}$ and let 
$s_j$, $j=0,\dots ,N$ ($\dim {\mathcal S}=N+1\leq\infty$) be an orthonormal basis of $\mathcal{S}$. 
In  this paper  we say that  the metric  $g$ is a {\em partially balanced metric with respect to ${\mathcal S}$}
if the smooth function, called {\em partial Bergman kernel},
$$T^{\mathcal S}_g(x)=\sum_{j=0}^Nh(s_{j}(x), s_{j}(x))$$
is a positive constant   ($T^{\mathcal S}_g$ really depends only on the metric $g$ and not on the orthonormal basis chosen). When ${\mathcal S}={\mathcal H}$ then
 $T^{\mathcal H}_g=T_g$ is Rawnsley's epsilon function (see \cite{rawnsley}, \cite{cgr1},  \cite{LZeddaCH} and references therein) and being  $g$ a partially balanced metric with respect to ${\mathcal H}$ means that $g$ is balanced in Donaldson's terminology (see \cite{donaldson} and \cite{hombal} for the compact case and \cite{arlquant}  for the noncompact case). Obviously, if $M$ is compact, $\mathcal{H}=H^0(L)$, where 
$H^0(L)$ is the (finite dimensional) space of global holomorphic sections of $L$.
In the sequel we will say that a \K\ metric $g$ on a complex manifold $M$ is {\em strictly} partially balanced with respect to ${\mathcal S}$ if 
$T_g^{\mathcal S}$ is a positive constant for  ${\mathcal S}$ {\em strictly} contained in ${\mathcal H}$.
Notice that given  a (strictly) partially balanced metric $g$ on a complex manifold $M$ with respect to  
${\mathcal S}\subseteq {\mathcal H}$ then, for all $x\in M$ there exists $s\in {\mathcal H}$ not vanishing at $x$ (the so called free based point condition in the compact case).   Then the Kodaira's map  $\varphi :M\rightarrow \C P^{N}, x\rightarrow [s_0(x):\cdots :s_N(x)]$ is well defined.
Morever, it is not hard to see that $\varphi^*\omega_{FS}=\omega+\frac{i}{2\pi}\partial\bar\partial\log T^{\mathcal S}_g$  and hence in the partially balanced case $\varphi$ is indeed a \K\ immersion, i.e. $\varphi^*g_{FS}=g$ where $g_{FS}$ (resp. $\omega_{FS}$) 
 is the Fubini-Study metric (resp. form) on $\C P^{N}$. 
Thus, by using the celebrated Calabi's rigidity theorem (\cite{Cal}, \cite{LoiZedda-book}) one  deduces  that in
the definition of partially balanced metric  the space ${\mathcal S}$ is determined up to unitary transformations of $\mathcal{H}$ and one can then simply   speak of  strictly partially balanced metric without specifying the space ${\mathcal S}$.
Partial Bergman kernels and their asymptotics  have been recently considered, when ${\mathcal S}$ is the subspace of ${\mathcal H}$ consisting of those holomorphic sections of $L$ vanishing  at a prescribed  order on an analytic subvariety of $M$ (see \cite{pokross}, \cite{rosssinger}, \cite{ZePeng}, \cite{ZePeng2}, \cite{ZePeng3}, \cite{ZePeng4}). Notice that  our definition is more general, since we are not fixing any analytic subvariety of $M$.

In  this paper we address the study of those metrics $g$ such that $mg$ is  strictly partially  balanced (with respect to some complex subspace ${\mathcal S}_m\subset{\mathcal H}_m$) for $m$ sufficiently large, were
${\mathcal H}_m$ denotes the Hilbert space of global holomorphic sections of  $L^m$ (the $m$-th tensor power of $L$) such that $\langle s, s\rangle_m=
\int_Mh_m(s(x), s(x))\frac{\omega^n}{n!}<\infty$ and $h_m$ is the Hermitian metric on $L^m$ such that 
${\rm Ric} (h_m)=m\omega$. Throughout the paper  a metric satisfying the previous condition  will be called a {\em strictly partially  regular metric}.  When ${\mathcal S}_m={\mathcal H}_m$
a  partially  regular metric $g$ is regular as defined in  \cite{loireg} (see also \cite{ALZ} and references therein) and it follows  that $g$ is a cscK (constant scalar curvature \K) metric.

Therefore it seems natural to address the following:

\vskip 0.3cm

\noindent
{\bf Question:}
Does there exist a complex manifold $M$ equipped with a   cscK metric $g$ 
such that  $g$ is strictly partially  regular?

\vskip 0.3cm

The aim of this paper is to provide a positive answer to the previous question in the noncompact case
as expressed by the following theorem proved in the next section.

 \begin{theor}\label{mainteor}
For all positive integer $n\geq 2$ there exist an $n$-dimensional noncompact complex manifold $M$ equipped with a 
strictly partially regular cscK metric  $g$. Furthermore for $n\geq 3$  the scalar curvature of $g$ can be chosen to be zero, positive or negative. 
\end{theor}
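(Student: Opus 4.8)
The plan is to build explicit examples out of products and, more importantly, out of noncompact homogeneous \K\ manifolds where the Bergman kernel can be computed by hand. The starting observation is that on $\C^n$ (or a bounded symmetric domain) with a suitable invariant \K\ form, the full space of $L^2$-holomorphic sections is infinite dimensional, and one has a great deal of freedom to throw away part of it while keeping the partial Bergman kernel constant: on a homogeneous space, if $\S_m$ is an invariant subspace then $T^{\S_m}_{mg}$ is automatically constant by homogeneity. So the real task is not to make the kernel constant --- that is cheap --- but to (i) make sure the truncation is \emph{strict}, i.e.\ $\S_m\subsetneq \mathcal H_m$ for all large $m$, and simultaneously (ii) arrange that $g$ itself is cscK, and (iii) in dimension $n\ge 3$ control the sign of the (constant) scalar curvature. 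The natural model for (i)--(ii) is the cigar/flat cylinder type construction on $\C^*\times(\text{something})$, or a product $\ncpt\times N$ where $\ncpt$ is a one-dimensional noncompact factor (the disc $\mathbb D$ or $\C^*$ or $\C$) carrying a complete cscK metric and $N$ is chosen to adjust the scalar curvature.

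Concretely I would proceed as follows. First, for $n=2$, take $M=\C\times\Sigma$ or $M=\mathbb D\times \C$ --- a product of two one-dimensional noncompact \K\ manifolds each of which is individually regular (their Bergman kernels are constant: this is classical for $\C$ with the flat metric and for $\mathbb D$ with a multiple of the hyperbolic metric, and more generally for homogeneous bounded domains). A product of regular metrics is regular, hence cscK, so $M$ with the product metric $g$ is cscK; the scalar curvature of a product being the sum, one tunes it to whatever value one wants. Now to make it \emph{strictly} partially regular rather than merely regular, replace $\mathcal H_m$ by the subspace $\S_m$ of sections whose expansion omits a fixed, nonzero, ``diagonal-invariant'' set of monomials --- for instance on $\C\times\C$ with weights, drop all pure powers $z_1^k$ --- chosen so that the remaining reproducing kernel on the diagonal is still constant. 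On the product of homogeneous factors the Bergman kernel is a product of one-variable kernels, each of which is a sum over an orthonormal monomial basis, and deleting a sub-basis that is invariant under the torus action keeps the restriction to the diagonal constant while strictly shrinking the space; one checks $\S_m\ne\mathcal H_m$ directly. The key point to verify carefully is that the free-based-point condition survives, i.e.\ $\S_m$ still has no common zeros, so that $mg$ is genuinely partially balanced with respect to $\S_m$.

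For $n\ge 3$ one has an extra factor to play with, and this is where the sign of the scalar curvature is handled: write $M=\ncpt_1\times\cdots$ as a product of $n$ noncompact one-dimensional regular \K\ manifolds, using copies of $(\C,g_{\mathrm{flat}})$ (scalar curvature $0$), copies of $(\mathbb D, c\,g_\om)$ with $c>0$ (scalar curvature a prescribed negative constant, tunable via $c$), and one factor realized as an open piece of $\C P^1$ or a scaled version thereof to inject positive curvature if desired. Since scalar curvatures add, for $n\ge 3$ any prescribed sign (and in fact any real value) is attainable, while for $n=2$ only certain combinations are available --- which is exactly why the theorem's sign statement is restricted to $n\ge3$. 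Then one repeats the strict-truncation argument on the product, deleting from $\mathcal H_m$ a torus-invariant sub-basis supported on the ``first'' factor, so that $T^{\S_m}_{mg}$ remains a positive constant (as a product of constants) while $\S_m\subsetneq\mathcal H_m$ for every large $m$.

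The main obstacle, and the step that needs the most care, is reconciling \emph{strictness} with \emph{constancy of the full-power kernel for all large $m$}: one must exhibit, for every sufficiently large $m$, a proper invariant subspace $\S_m\subsetneq\mathcal H_m$ with constant partial Bergman kernel and no base points, and do so coherently. On homogeneous factors this reduces to a clean bookkeeping problem about orthonormal monomial bases and their Hilbert-space norms (Gaussian-type integrals on $\C$, beta-integrals on $\mathbb D$), so the obstacle is really one of choosing the right deletion pattern and then verifying the three conditions --- constant kernel, strict containment, no common zeros --- rather than any hard analysis; the cscK property and the curvature sign come essentially for free from the product structure and Calabi's work on regular metrics cited above. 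I would also double-check that completeness of $g$ is not secretly needed and that the $L^2$ Hilbert spaces $\mathcal H_m$ are infinite-dimensional for each large $m$, which is what makes a strict truncation possible in the first place.
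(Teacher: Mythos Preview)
Your proposal has a genuine gap in the mechanism for strictness. You assert that on a homogeneous factor, deleting a torus-invariant set of monomials from the orthonormal basis keeps the partial Bergman kernel constant; but torus-invariance only forces the kernel to be \emph{radial}, not constant. Concretely, on $(\C,g_0)$ with weight $e^{-m|z|^2}$ the full kernel is
\[
e^{-m|z|^2}\sum_{k\ge 0}\frac{m^{k+1}}{\pi\,k!}\,|z|^{2k}=\frac{m}{\pi},
\]
and deleting any single monomial subtracts a nonconstant term. Your own test case on $\C\times\C$ (``drop all pure powers $z_1^k$'') subtracts exactly $C\,e^{-m|z_2|^2}$ from this constant, which is visibly nonconstant. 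The alternative you invoke --- taking $\S_m$ invariant under the \emph{full} transitive isometry group, so that constancy is automatic by homogeneity --- fails for a different reason: the Bargmann--Fock space on $\C$ (under the Heisenberg group) and the weighted Bergman spaces on the unit disc (holomorphic discrete series of $SU(1,1)$) are \emph{irreducible} unitary representations, hence admit no proper closed invariant subspace. The same irreducibility persists for outer tensor products, so on the homogeneous models you propose there is simply no $\S_m\subsetneq\mathcal{H}_m$ with constant partial kernel.

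The paper sidesteps this by working on a \emph{non-homogeneous} space, the punctured ball $\mathcal{D}_*\subset\C^2$, with the tailored potential $\Phi_*=\log|z|^2-\log(1-|z|^6)$. The exponent $6$ is the whole point: the expansion of $|z|^{2m}/(1-|z|^6)^m$ involves only monomials $z_1^jz_2^k$ with $j+k\equiv m\pmod 3$, so the span $\S_m$ of exactly those monomials is a \emph{proper} subspace whose partial Bergman kernel is constant by a direct beta-integral computation, while the monomials with $j+k\not\equiv m\pmod 3$ (which still lie in $\mathcal{H}_m$ once $j+k>m-3$) witness $\S_m\subsetneq\mathcal{H}_m$. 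Your product and scalar-curvature bookkeeping is essentially right and matches the paper (which uses $\mathcal{D}_*\times\C^{n-2}$ for negative curvature and $\mathcal{D}_*\times\C P^1\times\C^{n-3}$, with rescalings $3g_*$ and $4g_*$, for zero and positive curvature), but that step requires one genuinely strictly partially regular factor to feed into the product lemma, and your construction does not supply one.
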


As we have already noticed above  a partially balanced metric $g$ on a complex manifold $M$ is authomatically  projectively induced. 
On the other hand  in the complex one-dimensional case a projectively induced cscK metric is regular, being homogeneous (actually a complex space form) and so it cannot be strictly partially regular.
This is the reason why  in Theorem \ref{mainteor} we assume  $n\geq 2$.

Notice also that it is conjecturally true that  a projectively induced  cscK metric $g$ on a {\em compact} complex manifold $M$ is homogeneous and hence regular (\cite{DHL}, \cite{LZedda11}, see also \cite{FCAL} for an example of regular non homogeneous complete \K\ metric on the blow-up of $\C^2$ at the origin). Hence we believe that the previous question has a negative answer in the compact case.

Finally we still  do not know if there exist complex $2$-dimensional manifolds admitting  strictly partially  regular cscK metrics with non negative scalar curvature (see the proof of Theorem \ref{mainteor}). 

\vskip 0.3cm
\noindent {\bf Acknowledgments}. The authors would like to thank Michela Zedda for her useful comments on the paper.

\section{Proof of Theorem \ref{mainteor}}
In order to prove Theorem \ref{mainteor} we first show that the punctured unit  disk 
$${\mathcal D}_*=\{z=(z_1, z_2)\in\C^2 \ |\ 0< \ |z|^2<1\}, \ |z|^2=|z_1|^2+|z_2|^2,$$
can  be equipped with a strictly partially regular cscK metric $g_*$ (see Proposition \ref{propmain} below)
whose associated \K\ form is given by:
$$\omega_*=\frac{i}{2\pi}\partial\bar\partial\Phi_*, \ \Phi_*(z)=\log |z|^2-\log (1 -  |z|^6).$$
A direct computation shows that 
its volume form is given by 
\begin{equation}
\frac{\omega^2}{2!}=\frac{9r(1+2r^3)}{(1-r^3)^3}(\frac{i}{2\pi})^2dz_1\wedge d\bar z_1\wedge dz_2\wedge d\bar z_2, \ r=|z_1|^2+|z_2|^2,
\end{equation}
from which one easily sees (cfr. \cite{LMZszego}) that $g_*$ is a cscK (not Einstein) metric  with constant  scalar curvature 
\begin{equation}\label{sg*}
s_{g_*}=-24\pi.
\end{equation}

\begin{remark}\rm 
Notice that the metric $g_*$  is not complete at the origin while 
it is complete at  $\{z\in \C^2 \ | \ |z|^2=1\}$.
\end{remark}

Let $m$ be a positive integer and  ${\mathcal D}_*\times\C$ be the trivial holomorphic line bundle on ${\mathcal D}_*$ endowed with
the hermitian metric  
\begin{equation}\label{h*}
h_*^m(z, \xi)=e^{-m\Phi_*(z)}|\xi|^2=\frac{(1-|z|^6)^{m}|\xi|^2}{|z|^{2m}}=\frac{(1-r^3)^{m}|\xi|^2}{r^m}
\end{equation}
satisfying  ${\rm Ric} (h_*^m)=m\omega_*$ (cfr. \eqref{rich}).

\begin{prop}\label{propmain}
The \K\ metric $g_*$ on ${\mathcal D}_*$  is strictly  partially regular.
\end{prop}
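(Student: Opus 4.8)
The plan is to make the Hilbert space ${\mathcal H}_m$ and its norm completely explicit in terms of monomials, and then to single out a proper closed subspace ${\mathcal S}_m\subset{\mathcal H}_m$ on which the partial Bergman kernel of $mg_*$ is a positive constant, for every $m$ sufficiently large. Since ${\mathcal D}_*$ is the unit ball of $\C^2$ with the origin deleted, Hartogs' theorem identifies a holomorphic function on ${\mathcal D}_*$ with a power series $f=\sum_{\alpha}a_\alpha z^\alpha$, $\alpha=(\alpha_1,\alpha_2)$, convergent on the unit ball. Because the weight $h_*^m\,\omega_*^2/2!$ depends only on $r=|z_1|^2+|z_2|^2$ (by \eqref{h*} and the formula for the volume form of $g_*$), distinct monomials are orthogonal in ${\mathcal H}_m$ and $\|f\|_m^2=\sum_\alpha|a_\alpha|^2\|z^\alpha\|_m^2$. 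Integrating $|z^\alpha|^2$ over the spheres $\{|z|^2=r\}$ (a Beta integral for the $|z_1|^2\!:\!|z_2|^2$ split) and then over $r\in(0,1)$ gives
$$\|z^\alpha\|_m^2=\frac{\alpha_1!\,\alpha_2!}{(|\alpha|+1)!}\,c_{|\alpha|,m},\qquad c_{k,m}:=9\int_0^1 r^{\,k+2-m}(1+2r^3)(1-r^3)^{m-3}\,dr,$$
and $c_{k,m}$ is finite precisely when $k\ge m-2$ (for $m\ge3$; for $m\le2$ the space is trivial). Hence ${\mathcal H}_m$ is the $\|\cdot\|_m$-closure of $\operatorname{span}\{z^\alpha:|\alpha|\ge m-2\}$.

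Next I would take
$${\mathcal S}_m:=\overline{\operatorname{span}}\,\{z^\alpha:\ |\alpha|\ge m,\ |\alpha|\equiv m\pmod{3}\}\ \subsetneq\ {\mathcal H}_m,$$
a proper closed subspace, since e.g. $z_1^{\,m-1}\in{\mathcal H}_m\setminus{\mathcal S}_m$. Using $\{z^\alpha/\|z^\alpha\|_m\}$ as an orthonormal basis of ${\mathcal S}_m$, \eqref{h*}, and the multinomial identity $\sum_{|\alpha|=k}\frac{k!}{\alpha_1!\,\alpha_2!}|z^\alpha|^2=(|z_1|^2+|z_2|^2)^k=r^k$, the double sum defining the partial Bergman kernel collapses to
$$T^{{\mathcal S}_m}_{mg_*}(z)=\frac{(1-r^3)^m}{r^m}\sum_{j\ge0}\frac{m+3j+1}{c_{m+3j,m}}\,r^{m+3j}.$$
It then remains to evaluate $c_{m+3j,m}$: the substitution $t=r^3$ turns it into $3\int_0^1 t^{j}(1+2t)(1-t)^{m-3}\,dt$, and two Beta integrals give $c_{m+3j,m}=3(m-3)!\,\dfrac{j!\,(m+3j+1)}{(m+j-1)!}$, so $\dfrac{m+3j+1}{c_{m+3j,m}}=\dfrac{(m+j-1)!}{3(m-3)!\,j!}$. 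Inserting this and summing by means of the generalized binomial series $\sum_{j\ge0}\binom{m+j-1}{j}(r^3)^j=(1-r^3)^{-m}$ (valid since $0<r<1$ on ${\mathcal D}_*$), we get
$$T^{{\mathcal S}_m}_{mg_*}=\frac{(1-r^3)^m}{3(m-3)!}\sum_{j\ge0}\frac{(m+j-1)!}{j!}\,r^{3j}=\frac{(1-r^3)^m}{3(m-3)!}\,(m-1)!\,(1-r^3)^{-m}=\frac{(m-1)(m-2)}{3},$$
a positive constant independent of $z$. Since ${\mathcal S}_m$ is a proper subspace, this shows that $mg_*$ is strictly partially balanced for every $m\ge3$, i.e. $g_*$ is strictly partially regular.

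The heart of the matter is the choice of ${\mathcal S}_m$: a single angular degree $|\alpha|=k$ contributes $\frac{k+1}{c_{k,m}}\,r^{\,k-m}(1-r^3)^m$ to the kernel, which is never constant, so one is forced to superpose infinitely many degrees; the arithmetic progression $\{m+3j\}_{j\ge0}$ — dictated by the $|z|^6$ occurring in the potential $\Phi_*$ — is precisely the one whose coefficients $\tfrac{m+3j+1}{c_{m+3j,m}}$ reproduce a negative-binomial expansion and so cancel the weight $(1-r^3)^m/r^m$. The same computation with ${\mathcal S}={\mathcal H}_m$ cannot produce a constant, because of the surviving powers $r^{m-2},r^{m-1},r^{m+1},\dots$; this incidentally re-confirms that $g_*$ is strictly, and not merely, partially regular. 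The remaining steps are routine: the $U(2)$-invariance of the weight makes the monomials genuinely orthogonal, and the partial Bergman kernel, being an increasing limit of finite partial sums, is finite and independent of the orthonormal basis — here it is literally the constant computed above.
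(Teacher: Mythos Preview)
Your proof is correct and follows essentially the same approach as the paper's: you identify the same monomial orthonormal basis of ${\mathcal H}_m$, choose the same proper subspace ${\mathcal S}_m$ spanned by monomials with $|\alpha|\equiv m\pmod 3$, $|\alpha|\ge m$, and verify that the partial Bergman kernel collapses to a constant via the negative-binomial series. Your invocation of Hartogs' theorem to justify completeness of the monomials is a welcome clarification, and the factor-of-$4$ discrepancy in the final constant ($\tfrac{(m-1)(m-2)}{3}$ versus the paper's $\tfrac{4(m-1)(m-2)}{3}$) comes from a different normalisation of the torus integration and is immaterial.
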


\begin{proof}
Consider  the   Hilbert space 
$${\mathcal H}_m=\{f\in\Hol({\mathcal D}_*) \ | \ \|f\|_m^2= \int_{{\mathcal D}_*}e^{-m\Phi_*(z)}|f(z)|^2\frac{\omega^2}{2!}<\infty \}.$$ 
We show that 
 ${\mathcal H}_m\neq \{0\}$ for $m\geq 3$ and  an orthonormal basis of ${\mathcal H}_m$
is given by the monomials
$\{\frac{z_1^j z_2^k}{\|z_1^jz_2^k\|_m}\}_{j+k>m-3}$, 
where 
\begin{equation}\label{integral2}
\|z_1^jz_2^k\|^2_m 
=\frac{3j!k!}{4(j+k+1)!}\frac{\Gamma(\frac{j+k-m}{3}+1)(m-3)!}{\Gamma(\frac{j+k-m}{3}+m-1)} \left[ 1 +  \frac{2(\frac{j+k-m}{3}+1)}{\frac{j+k-m}{3}+m-1} \right].
\end{equation}

Let us first see for which values of $j$ and $k$ the monomial $z_1^j z_2^k$ belongs to ${\mathcal H}_m$, namely 
when its norm $\|z_1^jz_2^k\|^2_m$ is finite. 
By passing to polar coordinates $z_1=\rho_1e^{i\theta_1}, z_2=\rho_2e^{i\theta_2}$, $r=\rho_1^2+\rho_2^2$ one gets:
$$\|z_1^jz_2^k\|^2_m=9\int_{{\mathcal D}_*}\frac{(1+2r^3)(1-r^3)^{m-3}}{r^{m-1}}|z_1|^{2j}|z_2|^{2k}(\frac{i}{2\pi})^2dz_1\wedge d\bar z_1\wedge dz_2\wedge d\bar z_2$$
$$=9\int_{r=0}^1\rho_1^{2j+1}\rho_2^{2k+1}\frac{(1+2r^3)(1-r^3)^{m-3}}{r^{m-1}}d\rho_1d\rho_2$$

Now, by setting $r^{\frac{1}{2}}=\rho = \sqrt{\rho_1^2 + \rho_2^2}$ we can make the substitution $\rho_1 = \rho \cos \theta, \rho_2 = \rho \sin \theta$, $0 < \rho < 1$, $0 < \theta < \frac{\pi}{2}$, so that $d \rho_1 d \rho_2 = \rho d \rho d \theta$ and the integral becomes

$$9 \int_{\theta=0}^ {\frac{\pi}{2}} (\cos \theta)^{2j+1} (\sin \theta)^{2k+1} \int_{\rho=0}^1  \rho^{2j+2k-2m+5} (1 +2 \rho^6)(1 - \rho^6)^{m-3} d \rho$$

\begin{equation}\label{integral1}
 =   \frac{9j!k!}{2(j+k+1)!} \int_{\rho=0}^1  \rho^{2j+2k-2m+5} (1 +2 \rho^6)(1 - \rho^6)^{m-3} d \rho
\end{equation}

Let us make the change of variable 

$$x = \rho^{6}, \ \ dx =6 \rho^{5} d \rho$$

and (\ref{integral1}) rewrites

$$\|z_1^jz_2^k\|^2_m = \frac{3j!k!}{4(j+k+1)!} \int_0^{1 } x^{\frac{j+k-m}{3}}(1 +2x)(1 - x)^{m-3} d x.$$

This integral converges if and only if $m\geq 3$ and $j+k>m-3$.
Moreover, formula \eqref{integral2}, easily follows
by using   the well-known fact  that, for any $\alpha, \beta \in \C$ with $Re(\alpha) > 0$, $Re(\beta) > 0$, we have $\int_0^1 x^{\alpha-1}(1-x)^{\beta-1} = \frac{\Gamma(\alpha) \Gamma(\beta)}{\Gamma(\alpha + \beta)}$.

By radiality it follows the monomials $z_1^j z_2^k$ form a complete orthogonal system and hence 
$\{\frac{z_1^j z_2^k}{\|z_1^jz_2^k\|_m}\}_{j+k>m-3}$ turns out to be an orthonormal basis for ${\mathcal H}_m$.

We are now ready to prove  that  $g_*$ is strictly  partially regular  
for $m\geq 3$  with respect to the subspace ${\mathcal S}_m\subset {\mathcal H}_m$ spanned by $\{\frac{z_1^j z_2^k}{\|z_1^jz_2^k\|_m}\}_{j+k-m=3i}$, for $i=0, 1, \dots$.

One needs to verify  that there exists a positive constant $C_m$ (depending on $m$) such that:

\begin{equation}\label{constm}
T_{mg_*}^{{\mathcal S}_m}(z)=\frac{(1-|z|^6)^{m}}{|z|^{2m}}\cdot\sum\limits_{\substack {j+k= m + 3i \\ i = 0, 1, \dots}}
 \frac{|z_1|^{2j} |z_2|^{2k}}{\|z_1^j z_2^k\|^2_m}=C_m,
 \end{equation}
 Equation \eqref{integral2} for $j+k= m + 3i$ becomes
 \begin{equation}\label{integral3}
\|z_1^jz_2^k\|^2_m 
=\frac{3}{4(m-1)(m-2)}{m+3i\choose j}^{-1}{m+i-1\choose i}^{-1}.
\end{equation}

By using 
$$\frac{1}{(1-x)^{m}} = \sum_{i=0}^{\infty} \frac{m(m+1) \cdots (m + i -1)}{i!} x^i=\sum_{i=0}^{\infty} {m+i-1\choose i} x^i$$ 
(and $|z|^2=|z_1|^2+|z_2|^2$) one has

\begin{equation}\label{integral4}
\frac{|z|^{2m}}{(1-|z|^6)^{m}}=
\sum\limits_{\substack {j+k= m + 3i \\ i = 0, 1, \dots}} {m+i-1\choose i}  {m+3i\choose j} |z_1|^{2j} |z_2|^{2k}.
\end{equation}

By combining \eqref{integral3} and \eqref{integral4} one sees that \eqref{constm} is satisfied with 
$C_m=\frac{4}{3}(m-1)(m-2)$, and we are done.
\end{proof}

\begin{remark}\rm
Notice that the metric $g_*$ is radial, namely it admits a \K\ potential depending only on  $|z_1|^2+|z_2|^2$. Moreover, a simple computation shows that $|R_{g_*}|^2-4|{\rm Ric}_{g_*} |^2$ is  a  constant (given by
$-960\pi^2$), where ${\rm Ric}_{g_*}$ and  $R_{g_*}$, are, respectively, the Ricci tensor and the Riemann curvature tensor of the metric $g_*$. 
By using the classification results on radial cscK metrics given in \cite{LMZszego} (see also \cite{LSZ}) one can prove  the following: if $g$ is a radial, strictly partially  regular cscK metric on an $n$-dimensional complex manifold $M$ such that $|R_g|^2-4|{\rm Ric}_g |^2$ is constant,  then $n=2$ and there exist three positive constants $\mu, \lambda$  and $\xi$
such that $\mu$ and $\mu\frac{\lambda}{2}$ are positive integers, 
$M= \left\{ r = |z_1|^2 + |z_2|^2 \ | \ r < \xi^{-\frac{1}{\lambda+1}} \right\}$ and 
$\omega =\frac{i}{2\pi}\partial\bar\partial\Phi(m, \lambda,\xi)$
where 
\begin{equation}
\Phi(m, \lambda,\xi)=m   \log \frac{(|z_1|^2 + |z_2|^2)^{\frac{\lambda}{2}}}{1 - \xi (|z_1|^2 + |z_2|^2)^{(\lambda+1)}}.
\end{equation}
Moreover the scalar curvature $s_g$ of $g$ is given by
\begin{equation}\label{sg}
s_{g}=-\frac{24\pi}{m}.
\end{equation}
Notice that when $m=1$, $\lambda =2$ and $\xi=1$ one regains $M={\mathcal D}_*$ and  $g=g_*$.
\end{remark}

In the proof of Theorem \ref{mainteor} we need   the following:
\begin{lem}\label{lemprod}
Let $(M_1,g_1)$ and $(M_2, g_2)$ be two  K\"ahler manifolds such that $g_1$ is  strictly  partially  balanced and $g_2$ is  (strictly) partially balanced. Then the metric  $g_1\oplus g_2$ on $M_1\times M_2$ is strictly partially balanced.
In particular, if  $g_1$ is strictly partially  regular  and $g_2$ is (strictly) partially  regular
then  $g_1\oplus g_2$  is strictly partially  regular.
\end{lem}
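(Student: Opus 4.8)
The plan is to reduce everything to a statement about partial Bergman kernels of tensor products of line bundles. Let $L_1\to M_1$ and $L_2\to M_2$ be the polarizing line bundles with Hermitian metrics $h_1,h_2$ such that $\mathrm{Ric}(h_i)=\omega_i$, and let $p_i\colon M_1\times M_2\to M_i$ denote the projections. Then $L:=p_1^*L_1\otimes p_2^*L_2$ is a line bundle over $M_1\times M_2$ carrying the Hermitian metric $h:=p_1^*h_1\otimes p_2^*h_2$, whose Ricci curvature is $p_1^*\omega_1+p_2^*\omega_2$, i.e. the \K\ form of $g_1\oplus g_2$. So $g_1\oplus g_2$ is polarized by $(L,h)$, and what we must produce is a complex subspace $\S$ of the relevant Hilbert space $\mathcal H$ on which the partial Bergman kernel is a positive constant, with $\S$ strictly contained in $\mathcal H$.

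First I would set $\S:=\S_1\otimes\S_2$, the (Hilbert) tensor product of the given subspaces $\S_1\subseteq\mathcal H_1$ and $\S_2\subseteq\mathcal H_2$, sitting inside $\mathcal H\supseteq\mathcal H_1\otimes\mathcal H_2$ via $s_1\otimes s_2\mapsto p_1^*s_1\otimes p_2^*s_2$; one checks this is an isometric embedding using Fubini's theorem together with the product structure of the volume form $\frac{(\omega_1\oplus\omega_2)^{n_1+n_2}}{(n_1+n_2)!}=\frac{\omega_1^{n_1}}{n_1!}\wedge\frac{\omega_2^{n_2}}{n_2!}$. If $\{s_j^{(1)}\}$ and $\{s_k^{(2)}\}$ are orthonormal bases of $\S_1$ and $\S_2$, then $\{p_1^*s_j^{(1)}\otimes p_2^*s_k^{(2)}\}$ is an orthonormal basis (or Hilbert basis, if infinite) of $\S$, and the defining pointwise identity $h\big(p_1^*s_j^{(1)}\otimes p_2^*s_k^{(2)}\big)=h_1\big(s_j^{(1)}\big)\,h_2\big(s_k^{(2)}\big)$ gives
\begin{equation*}
T^{\S}_{g_1\oplus g_2}(x_1,x_2)=\Big(\sum_j h_1\big(s_j^{(1)}(x_1)\big)\Big)\Big(\sum_k h_2\big(s_k^{(2)}(x_2)\big)\Big)=T^{\S_1}_{g_1}(x_1)\cdot T^{\S_2}_{g_2}(x_2),
\end{equation*}
which is the product of two positive constants by hypothesis, hence a positive constant.

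It remains to verify strictness, i.e.\ that $\S=\S_1\otimes\S_2$ is strictly contained in $\mathcal H$. Since $g_1$ is \emph{strictly} partially balanced there is, by Calabi rigidity (as recalled in the introduction), a nonzero $t\in\mathcal H_1$ orthogonal to $\S_1$; equivalently $\S_1\subsetneq\mathcal H_1$. Then $p_1^*t\otimes p_2^*s\in\mathcal H_1\otimes\mathcal H_2\subseteq\mathcal H$ (for any nonzero $s\in\mathcal H_2$, which exists since $\mathcal H_2\neq\{0\}$) is orthogonal to every basis element of $\S$ because the inner product factors, so $\S$ is a proper subspace of $\mathcal H$; therefore $g_1\oplus g_2$ is strictly partially balanced. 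The ``in particular'' statement follows by applying this to $L_1^m,L_2^m$ for all $m$ large: if $mg_1$ is strictly partially balanced with respect to some $\S_{1,m}$ and $mg_2$ is (strictly) partially balanced with respect to some $\S_{2,m}$ for all $m\gg 0$, then $m(g_1\oplus g_2)$ is strictly partially balanced with respect to $\S_{1,m}\otimes\S_{2,m}$ for all $m\gg 0$, which is exactly strict partial regularity of $g_1\oplus g_2$.

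The only genuinely delicate point is the identification $\mathcal H_1\otimes\mathcal H_2\hookrightarrow\mathcal H$ and making sure the chosen basis is complete \emph{in $\S$} (not in $\mathcal H$ — completeness in $\S$ is all we need, and it is immediate from completeness of the factor bases), together with the interchange of summation and the infinite products when $\dim\S_i=\infty$; this is handled by the monotone/dominated convergence available from positivity of the summands and the fact that the partial sums of $T^{\S_i}_{g_i}$ are uniformly bounded by the constant value. Everything else is a routine application of Fubini's theorem to the product volume form. I do not expect any serious obstacle here.
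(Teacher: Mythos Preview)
Your proof is correct and follows essentially the same route as the paper: take $\S=\S_1\otimes\S_2$ inside $\mathcal H_1\otimes\mathcal H_2\subseteq\mathcal H$, compute $T^{\S}_{g_1\oplus g_2}=T^{\S_1}_{g_1}\cdot T^{\S_2}_{g_2}$ via the product volume form, and pass to $m\gg 0$ for the regular statement. Your write-up is in fact a bit more careful than the paper's about the isometric embedding, the strictness verification, and convergence in the infinite-dimensional case; the only quibble is that invoking Calabi rigidity for the strictness is unnecessary, since $\S_1\subsetneq\mathcal H_1$ is simply the definition of ``strictly'' partially balanced.
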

\begin{proof}
For $\alpha=1,2$ let $(L_\alpha, h_\alpha)$ be the Hermitian line bundle over $M_\alpha$ such that ${\rm Ric} (h_\alpha)=\omega_\alpha$ (cfr. (\ref{rich}) in the introduction), where 
$\omega_{\alpha}$ is the \K\ form associated to $g_{\alpha}$. Let $(L_{1, 2}=L_1\otimes L_2, h_{1, 2}=h_1\otimes h_2)$ be the  holomorphic hermitian  line bundle over $M_1\times M_2$ such that ${\rm Ric} (h_{1, 2})=\omega_1\oplus \omega_2$ and 
$$
\mathcal{H}_\alpha=\left\{s\in H^0(L_\alpha)\  |\ \int_{M_\alpha} h_\alpha(s,s)\frac{\omega_\alpha^{n_\alpha}}{n_\alpha!}<\infty\right\},
$$
where $n_\alpha$ is the complex dimension of $M_\alpha$. 
Let ${\mathcal S}_\alpha\subset\mathcal{H}_\alpha$, $\alpha =1, 2$, be the subspace of  $\mathcal{H}_\alpha$ 
with respect to which $g_{\alpha}$ is 
(strictly) partially  balanced. Notice that ${\mathcal S}_1\subsetneq\mathcal{H}_1$ since $g_1$ is 
strictly partially  balanced.

Let $\{s^1_j\}$ (resp. $\{s^2_k\}$) be an orthonormal basis for $\mathcal{S}_1$ (resp. $\mathcal{S}_2$) with respect to the $L^2$-product
induced by $h_1$ (resp. $h_2$).

It is not hard to see (cfr.  \cite[Lemma 7]{LZedda15}) that $\{s^1_j\otimes s^2_k\}$ is an orthonormal basis
for the subspace ${\mathcal S}_{1,2}={\mathcal S}_1\otimes {\mathcal S}_2$  
of the 
Hilbert space
$$\mathcal{H}_{1, 2}=\left\{s\in H^0(L_{1,2}) \  |\ \int_{M_1\times M_2} h_{1, 2}(s,s)\frac{(\omega_1\oplus\omega_2)^{n_1+n_2}}{(n_1+n_2)!}<\infty\right\}.$$
Thus
\begin{equation*}
\begin{split}
T_{g_1\oplus g_2}^{{\mathcal S}_{1,2}}(x, y)=&\sum_{j, k}h_{1,2}(s^1_j(x)\otimes s^2_k(y),s^1_j(x)\otimes s^2_k(y))\\
=&\sum_{j}h_{1}(s^1_j(x),s^1_j(x))\sum_{k}h_{2}(s^2_k(y),s^2_k(y))=T_{g_1}^{{\mathcal S}_{1}}(x)T_{g_2}^{{\mathcal S}_{2}}(y)=C_1C_2.
\end{split}
\end{equation*}
for two positive constant $C_1$ and $C_2$.
Then   $g_1\oplus g_2$ is strictly partially  balanced with respect to the subspace  ${\mathcal S}_{1,2}\subsetneq \mathcal{H}_{1, 2}$.
Assume now that $g_{\alpha}$ is (strictly) partially regular and let  $m_\alpha$ be  such that  $m g_\alpha$ is (strictly) partially  balanced  for $m\geq m_\alpha$. Then, by the first part, $m(g_1\oplus g_2)$ is strictly partially balanced
for $m\geq \max\{m_1, m_2\}$, i.e. $g_1\oplus g_2$ is strictly partially regular. 
\end{proof}

\begin{proof}[Proof of Theorem \ref{mainteor}]
It is well-known (see, e.g. \cite{arlquant}) that the Fubini-Study metric $g_{FS}$ on the complex sphere  $\C P^1$, 
and  the flat metric $g_0$ on the complex Euclidean  space $\C^k$, $k\geq 1$,
are  regular cscK metrics of constant scalar curvatures $s_{g_{FS}}=8 \pi$ and $s_{g_0}=0$ respectively.
By   Proposition \ref{propmain} and  Lemma \ref{lemprod}  the metric $g_*\oplus g_0$  is a strictly partially regular cscK metric with negative scalar curvature  on the complex $n$-dimensional  manifold ${\mathcal D}_*\times\C^{n-2}$, for all $n\geq 2$.

Further, by combining Proposition \ref{propmain}, Lemma \ref{lemprod},  \eqref{sg*}  and \eqref{sg} it follows that  the
\K\ metrics $3g_*\oplus g_{FS}\oplus g_{0}$ and  $4g_*\oplus g_{FS}\oplus g_{0}$ on the complex $n$-dimensional manifold ${\mathcal D}_*\times\CP^1\times \C^{n-3}$, $n\geq 3$,  
are  strictly partially regular metrics with vanishing scalar curvature and postive scalar curvature respectively. 
\end{proof}

\end{document}